\def\TC{\protect\operatorname{TC}}
\def\bsd{\protect\operatorname{Sd}}
\def\SC{\protect\operatorname{SC}}
\newtheorem{proposition}{Proposition}[section]
\newtheorem{definition}[proposition]{Definition}
\newtheorem{theo}[proposition]{Theorem}
\newtheorem{remark}[proposition]{Remark}
\newtheorem{lema}[proposition]{Lemma}
\begin{document}

\title{Simplicial Complexity:\\piecewise linear motion planning in robotics}
\author{Jes\'us Gonz\'alez\thanks{Partially supported by Conacyt Research Grant 221221.}}
\date{\today}

\maketitle

\begin{abstract}
Using the notion of contiguity of simplicial maps, we adapt Farber's topological complexity to the realm of simplicial complexes. We show that, for a finite simplicial complex $K$, our discretized concept recovers the topological complexity of the realization $\|K\|$. Our approach is well suited for designing and implementing algorithms that search for optimal motion planners for autonomous systems in real-life applications.
\end{abstract}

{\small 2010 Mathematics Subject Classification: 57Q05, 05E45, 55M30, 68T40.}

{\small Keywords and phrases: Simplicial complex, barycentric subdivision, contiguous simplicial maps, motion planning.}

\section{Introduction}
For a topological space $X$, let $P(X)$ stand for the free path space on $X$ endowed with the compact-open topology. Farber's topological complexity $\TC(X)$ is defined as the sectional category of the evaluation map $e\colon P(X)\to X\times X$, $e(\gamma)=(\gamma(0),\gamma(1))$. Here we use the reduced form of the resulting homotopy invariant, namely a contractible space has zero topological complexity. Thus $\TC(X)+1$ is the smallest cardinality of open covers $\{U_i\}_i$ of $X\times X$ so that $e$ admits a continuous section $\sigma_i$ on each $U_i$. The open sets $U_i$ in such an open cover are called {\it local domains}, the corresponding sections $\sigma_i$ are called {\it local rules}, and the family of pairs $\{(U_i,\sigma_i)\}$ is called a {\it motion planner} for $X$. A motion planner is said to be optimal if it has $\TC_{s}(X)+1$ local domains. In view of the continuity requirement on local rules, an optimal motion planner for the configuration space of a given robot gives us a way to minimize the possibility of accidents in the programing of the robot's performance in noisy environments. In short, topological complexity provides us with a topological framework for studying the motion planning problem in robotics. 

\smallskip
Due to its homotopy nature, Farber's idea quickly attracted the attention of topologists, who began to develop the theoretical aspects of topological complexity. In particular, a number of methods have emerged to estimate $\TC(X)$ for families of spaces~$X$. For instance, homology methods have proven to be useful (and accesible) for bounding from below $\TC(X)$, while more sophisticated (but hard to deal with) homotopy methods have been used to get upper bounds. In some cases, the power of the algebraic topology toolbox leads to the actual computation of $\TC(X)$ without, however, giving a clue about how to construct explicit motion planners. Such a characteristic of the current $\TC$-development has perhaps been the main obstruction for the actual applicability of the $\TC$ ideas to problems arising from real-life needs.

\smallskip
The present paper aims at mending the above situation. Our goal is to lay the theoretical grounds for constructing (potentially optimal) motion planners through computer-implementable algorithms and heuristics. The idea is to use computational topology methods in order to replace the hard (often prohibitive) homotopy considerations for estimating $\TC(X)$ from above. 

\smallskip
A previous attempt to discretize Farber's TC appeared in~\cite[Example~4.5]{tanaka}, where topological complexity is developed in the context of finite spaces. However, the resulting concept appears to be rigid, and in fact it fails to detect the well known equality $\TC(S^1)=1$. Indeed, the best estimate coming from Tanaka's model is $\TC(S^1)\leq3$. In contrast, the use of the barycentric subdivision functor in our model yields a much more flexible invariant. In particular, we are able to recast the equality $\TC(S^1)=1$ in purely combinatorial terms.

\smallskip
Our approach is fully algorithmizable and can be implemented in a computer in order to explore the topological complexity of compact polyhedra.

\smallskip
Our idea rests on the observation that the sectional category of a fibration $p\colon E\to B$ over a CW complex $B$ can be defined in terms of the existence of local sections of $p$ on the elements of a covering of $B$ by Euclidean neighborhood retracts (e.g.~subcomplexes) ---instead of by open sets. In particular, in the case of the fibration defining $\TC(X)$, the following result, whose proof is elementary (compare to~\cite[Lemma 4.21]{F}), allows us to reduce the resulting sectioning problem to a standard homotopy problem, which will later be translated into purely simplicial terms.

\begin{lema}\label{motivation} The evaluation map $e\colon P(X)\to X\times X$ admits a section on a subset $A$ of $X\times X$ if and only if the two compositions $A \hookrightarrow{}X\times X \xrightarrow{\pi_1} X$ and $A \hookrightarrow{}X\times X \xrightarrow{\pi_2} X$ are homotopic.
\end{lema}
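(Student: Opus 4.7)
The plan is to interpret the statement as a direct application of the exponential law for the compact-open topology, rewriting paths as homotopies and vice versa.

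For the forward direction, I would start with a continuous section $\sigma\colon A\to P(X)$ of $e$ and define $H\colon A\times[0,1]\to X$ by $H((a,b),t)=\sigma(a,b)(t)$. Continuity of $H$ is precisely the standard adjunction statement that, because $[0,1]$ is locally compact Hausdorff, a continuous map into $P(X)=X^{[0,1]}$ with the compact-open topology is the same as a continuous map out of the product with $[0,1]$; the defining condition $e\circ\sigma=\mathrm{id}_A$ translates into $H((a,b),0)=a$ and $H((a,b),1)=b$, i.e.\ $H$ is a homotopy from $\pi_1|_A$ to $\pi_2|_A$.

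For the converse, I would start with a homotopy $H\colon A\times[0,1]\to X$ between $\pi_1|_A$ and $\pi_2|_A$ and define $\sigma\colon A\to P(X)$ by $\sigma(a,b)(t)=H((a,b),t)$. The same exponential adjunction guarantees $\sigma$ is continuous, and the endpoint conditions on $H$ force $\sigma(a,b)(0)=a$ and $\sigma(a,b)(1)=b$, so $e\circ\sigma=\mathrm{id}_A$, as required.

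The only point that needs any care is invoking the exponential law correctly, and since the exponent $[0,1]$ is compact Hausdorff the adjunction is an honest homeomorphism $\mathrm{Map}(A\times[0,1],X)\cong\mathrm{Map}(A,P(X))$ without any convenient-category hypotheses on $A$ or $X$; thus the proof collapses to an explicit two-line calculation in each direction and has no substantive obstacle.
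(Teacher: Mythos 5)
Your proof is correct and is exactly the elementary argument the paper has in mind: the paper omits the proof, calling it elementary and pointing to Farber's Lemma~4.21, which proceeds by the same exponential-law correspondence between sections $\sigma\colon A\to P(X)$ of $e$ and homotopies $H\colon A\times[0,1]\to X$ from $\pi_1|_A$ to $\pi_2|_A$. The only (inessential) overstatement is calling the adjunction a homeomorphism of mapping spaces; for the argument you only need the bijection between sets of continuous maps, which indeed holds for arbitrary $A$ and $X$ because $[0,1]$ is locally compact Hausdorff.
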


\section{Simplicial complexity}
We work with (abstract) simplicial complexes $K$, referred here as ``complexes'', and their topological realizations $\|K\|$. With an eye on applications, we will only care about finite complexes. In addition, the finiteness hypothesis will allow us to prove that the discretized topological complexity recasts the original topological concept.

\smallskip
As suggested by Lemma~\ref{motivation}, we will need to consider a simplicial structure on a product of complexes in such a way that the topological realization of the product recovers the product of the topological realizations of the factors. Consequently, all complexes we deal with will be assumed to be ordered, and their product will be taken in the category of ordered complexes (see for instance~\cite{ES52} for the classical details on the construction). However, we stress that maps of complexes will \emph{not} be required to preserve the given orderings.

\smallskip
The notion of contiguity of simplicial maps (see for instance~\cite[Chapter 3.5]{Spanier}) is recast next in a form suitable for the computational applications we have in mind.

\begin{definition}
For a non-negative integer $c$, two maps of complexes $f,g: K\to L$ are called $c$-contiguous if there is a sequence of maps $h_0, h_1, \cdots, h_c: K\to L$, with $h_0=f$ and $h_c=g$, such that for each $i\in\{1,2,\ldots,c\}$ and for each simplex $s$ of $K$, $h_{i-1}(s)\cup h_i(s)$ is a simplex of $L$ (i.e.~so that each pair $(h_{i-1},h_i)$ is contiguous).
\end{definition}

A collection $\mathcal{C}$ of subcomplexes of $K$ is a cover if $K=\bigcup_{L\in\mathcal{C}}L$ (of course, in such a case, $\bigcup_{L\in\mathcal{C}}\|L\|=\|K\|$). For a non-negative integer $b$, let $\bsd^b$ stand for the $b$-fold iterated barycentric subdivision functor, and fix a map
\begin{equation}\label{laaproxtion}
\iota\colon \bsd^b(K\times K)\to \bsd^{b-1}(K\times K)
\end{equation}
which approximates the identity on $\|K\|\times\|K\|$. By abuse of notation, iterated compositions of these maps will also be denoted by $\iota\colon\bsd^b(K\times K)\to \bsd^{b'}(K\times K)$. Lastly, for $i\in\{1,2\}$, let $\pi_i\colon\bsd^b(K\times K)\to K$ denote the composite of $\iota\colon\bsd^b(K\times K)\to K\times K$ with the $i$-th projection map $K\times K\to K$. By~\cite[Lemmas~3.5.1 and~3.5.4]{Spanier}, the contiguity class of each of these maps is well-defined.

\begin{definition}\label{scbc}
For non-negative integers $b$ and $c$, the $(b,c)$-simplicial complexity of an (ordered) complex $K$, $\SC^b_c(K)$, is one less than the smallest cardinality of finite covers of $\bsd^b(K\times K)$ by subcomplexes $J$ for which the compositions
$$\xymatrix{
J\; \ar@{^{(}->}[r] & \bsd^b(K\times K) \ar[r]^<<<<{\pi_1} & K & \mathrm{and} & J\; \ar@{^{(}->}[r] & \bsd^b(K\times K) \ar[r]^<<<<{\pi_2} & K
}$$
are $c$-contiguous. When no such finite coverings exist, we set $\SC^b_c(K)=\infty$.
\end{definition}

\begin{remark}{\em
The ordering in $K$ is used only for the construction of the suitable simplicial structure on $K\times K$; the value of $\SC^b_c(K)$ is clearly independent of the chosen ordering.
}\end{remark}

Since the topological realizations of $c$-contiguous maps are homotopic, Lemma~\ref{motivation} and~\cite[Proposition~4.9]{F} yield
\begin{equation}\label{hasus}
\SC^b_c(K)\geq\TC(\|K\|).
\end{equation}

The obvious monotonic sequence $\SC_0^b(K)\geq\SC_1^b(K)\geq\SC_2^b(K)\geq\cdots\geq0$ is eventually constant, and we let $\SC^b(K)$ stand for the corresponding stable value. Note that the sequence of numbers $\SC^b_c(K)$ depends, in principle, on the chosen approximations $\iota$. However, we prove:

\begin{lema}\label{sceindaprox}
The stabilized value $\SC^b(K)$ is independent of the chosen approximations~(\ref{laaproxtion}). 
\end{lema}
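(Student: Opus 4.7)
The plan is to show that two choices $\iota$ and $\iota'$ of approximations produce the same stable value. Write temporarily $\SC^b_c(K;\iota)$ for the invariant of Definition~\ref{scbc} computed via $\iota$, and $\SC^b(K;\iota)$ for its stable value; similarly with $\iota'$. Let $\pi_i$ and $\pi_i'$ denote the respective projections $\bsd^b(K\times K)\to K$ for $i=1,2$. Since $\iota$ and $\iota'$ are both simplicial approximations to the identity of $\|K\times K\|$, \cite[Lemmas~3.5.1 and~3.5.4]{Spanier} ---the very results already invoked in the setup--- place them in the same contiguity class. A telescoping argument over the $b$ stages of subdivision then produces a non-negative integer $c_0$ for which $\pi_i$ and $\pi_i'$ are $c_0$-contiguous, $i=1,2$.

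Two elementary properties of $c$-contiguity will do the rest. First, precomposition with any simplicial map $j\colon J\to \bsd^b(K\times K)$ preserves $c$-contiguity: if $h_0,\ldots,h_c$ is a contiguity sequence from $f$ to $g$, then $h_0\circ j,\ldots,h_c\circ j$ is one from $f\circ j$ to $g\circ j$, since $h_{k-1}(j(s))\cup h_k(j(s))$ is already a simplex whenever $j(s)$ is. Second, contiguity sequences concatenate: if $f$ is $a$-contiguous to $g$ and $g$ is $b$-contiguous to $h$, then $f$ is $(a+b)$-contiguous to $h$, by joining the two sequences at $g$.

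Assume now that a cover $\{J_1,\ldots,J_N\}$ witnesses $\SC^b_c(K;\iota)\leq N-1$, so that for the inclusions $j_k\colon J_k\hookrightarrow\bsd^b(K\times K)$, each pair $(\pi_1\circ j_k,\pi_2\circ j_k)$ is $c$-contiguous. Concatenating the three contiguity chains from $\pi_1'\circ j_k$ to $\pi_1\circ j_k$ (length $c_0$), from $\pi_1\circ j_k$ to $\pi_2\circ j_k$ (length $c$), and from $\pi_2\circ j_k$ to $\pi_2'\circ j_k$ (length $c_0$), one sees that $(\pi_1'\circ j_k,\pi_2'\circ j_k)$ is $(c+2c_0)$-contiguous. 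The same cover therefore witnesses $\SC^b_{c+2c_0}(K;\iota')\leq N-1$, so $\SC^b_{c+2c_0}(K;\iota')\leq \SC^b_c(K;\iota)$.

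To conclude, choose $c$ large enough that $\SC^b_c(K;\iota)$ has reached its stable value $\SC^b(K;\iota)$. The previous inequality together with the monotonicity of the $\iota'$-sequence yield $\SC^b(K;\iota')\leq \SC^b(K;\iota)$. Swapping the roles of $\iota$ and $\iota'$ gives the reverse inequality, and the lemma follows. I do not anticipate any serious obstacle: the dependence on the chosen approximations affects the invariant only by a uniformly bounded shift $2c_0$ in the contiguity index, and this shift is absorbed by the stabilization; the only real input is Spanier's classical fact that simplicial approximations to a fixed continuous map are contiguous.
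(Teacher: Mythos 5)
Your proposal is correct and follows essentially the same route as the paper: invoke Spanier's Lemmas 3.5.1 and 3.5.4 to see that the two families of projections (and their restrictions to each subcomplex of a cover) lie in a bounded contiguity class, concatenate contiguity chains to trade a cover witnessing $\SC^b_c$ for one choice of approximations into a witness for $\SC^b_{c+2c_0}$ for the other, and let stabilization in $c$ absorb the shift. The only difference is cosmetic: the paper notes the composite approximations are actually $1$-contiguous, giving the cleaner chain $\SC^b_c(K)\geq\overline{\SC}^b_{c+2}(K)\geq\SC^b_{c+4}(K)$, whereas your uniform bound $2c_0$ serves the same purpose.
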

\begin{proof}
Let $\overline{\SC}^b_c$ stand for the invariant defined in terms of another set of approximations $\overline{\iota}\colon \bsd^b(K\times K)\to \bsd^{b-1}(K\times K)$ to the identity. By~\cite[Lemmas~3.5.1 and~3.5.4]{Spanier}, the corresponding compositions $\pi_i,\overline{\pi_i}\colon\bsd^b(K\times K)\to K$, as well as their restrictions $\pi_i\circ j$ and $\overline{\pi_i}\circ j$, are 1-contiguous, where $j\colon J\hookrightarrow\bsd^b(K\times K)$ is the inclusion, and $i\in\{1,2\}$. Consequently $\SC^b_c(K)\geq\overline{\SC}^b_{c+2}(K)\geq\SC^b_{c+4}(K)$, and the result follows.
\end{proof}

Following the idea in the previous proof, note that in the setting of Definition~\ref{scbc}, if $\lambda_J\colon\bsd(J)\to J$ is a simplicial approximation to the identity, then the two compositions in the diagram 
$$\xymatrix{
J\;\ar@{^{(}->}[r] & \bsd^b(K\times K) \\
\bsd(J)\;\ar@{^{(}->}[r] \ar[u]^\lambda& \bsd^{b+1}(K\times K)\ar[u]_\iota
}$$
are 1-contiguous, as they are simplicial approximations to the inclusion $\|J\|\subseteq\|K\|\times\|K\|$. Consequently $\SC^b_c(K)\geq\SC^{b+1}_{c+2}(K)$ and
\begin{equation}\label{stabb}
\SC^0(K)\geq\SC^1(K)\geq\SC^2(K)\geq\cdots.
\end{equation}

\begin{definition} The simplicial complexity of a complex $K$, $\SC(K)$, is the stabilized value of the monotonic sequence~(\ref{stabb}).
\end{definition}

We stress the fact that the equality $\SC(K)=\SC^b_c(K)$ holds for large enough indices $b$ and $c$ (depending on $K$), so that~(\ref{hasus}) becomes
\begin{equation}\label{bascom}
\SC(K)\geq\TC(\|K\|).
\end{equation}
In the light of~\cite[Theorems~3.5.6 and~3.5.8]{Spanier}, it is reasonable to expect that the latter inequality is sharp. This fact is illustrated in Section~\ref{toy} for the circle, and is proved next in detail for any $K$.

\begin{theo}\label{sc=tc}
Equality holds in~(\ref{bascom}) for any finite $K$.
\end{theo}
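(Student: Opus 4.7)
The inequality $\TC(\|K\|)\leq \SC(K)$ is (\ref{bascom}), so the plan is to establish the reverse inequality $\SC(K)\leq \TC(\|K\|)$ by converting an optimal topological motion planner for $\|K\|$ into a simplicial cover with the required $c$-contiguity on some sufficiently iterated barycentric subdivision of $K\times K$. Set $k=\TC(\|K\|)$ and fix an optimal open cover $\{U_0,\ldots,U_k\}$ of $\|K\|\times\|K\|$; by Lemma~\ref{motivation}, the two topological projections $p_1,p_2\colon\|K\|\times\|K\|\to\|K\|$ restrict to homotopic maps on each $U_i$.

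To pass from the open cover $\{U_i\}$ to a simplicial one, I would invoke the Lebesgue number lemma on the compact metric space $\|K\|\times\|K\|$, which, combined with the standard fact that $\operatorname{mesh}(\bsd^b(K\times K))\to 0$ as $b\to\infty$, yields an integer $b_0$ such that for every $b\geq b_0$ each closed simplex of $\bsd^b(K\times K)$ is contained in some $U_i$. For such $b$, let $J_i\subseteq\bsd^b(K\times K)$ be the subcomplex spanned by those simplices $\sigma$ with $\|\sigma\|\subseteq U_i$; this is indeed a subcomplex, since a face $\tau\leq\sigma$ satisfies $\|\tau\|\subseteq\|\sigma\|\subseteq U_i$, and $\{J_i\}_{i=0}^k$ is a cover of $\bsd^b(K\times K)$ by subcomplexes.

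The crux is to upgrade the topological homotopy $p_1|_{\|J_i\|}\simeq p_2|_{\|J_i\|}$ into a simplicial $c$-contiguity of the two compositions $\pi_\ell|_{J_i}\colon J_i\to K$. Since $\iota$ is a simplicial approximation to the identity on $\|K\times K\|$, each $\pi_\ell|_{J_i}=p_\ell\circ\iota|_{J_i}$ is a simplicial approximation of the continuous map $p_\ell|_{\|J_i\|}$, and the same holds after any further iterated subdivision. By Spanier's Theorems~3.5.6 and~3.5.8, any two simplicial approximations of homotopic maps between realizations of finite complexes lie in the same contiguity class after sufficiently many barycentric subdivisions of the domain. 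Applying this to each $i\in\{0,\ldots,k\}$ and taking the maximum of the resulting parameters produces integers $n\geq 0$ and $c\geq 0$, independent of $i$, such that the restrictions $\pi_1|_{\bsd^n J_i}$ and $\pi_2|_{\bsd^n J_i}$ of the projections $\pi_1,\pi_2\colon\bsd^{b+n}(K\times K)\to K$ are $c$-contiguous for every $i$. Since $\{\bsd^n J_i\}_{i=0}^k$ remains a cover of $\bsd^{b+n}(K\times K)$ by subcomplexes, this yields $\SC^{b+n}_c(K)\leq k$, and hence $\SC(K)\leq k=\TC(\|K\|)$.

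The main obstacle lies in the bookkeeping of the last paragraph: one must confirm that the concrete simplicial maps $\pi_\ell|_{\bsd^n J_i}$ produced by the fixed choice of $\iota$ really do sit in the contiguity class furnished by Spanier's theorem, and that the parameter $c$ can be chosen uniformly in $i$. The uniformity is automatic from finiteness of the cover; the contiguity comparison is made cleaner by Lemma~\ref{sceindaprox}, which allows one to replace $\iota$ by whichever simplicial approximation to the identity is most convenient for applying Spanier's results.
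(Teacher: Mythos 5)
Your proposal is correct and follows essentially the same route as the paper: subdivide $K\times K$ until every simplex lands in a local domain (the paper phrases your Lebesgue-number step simply as choosing $b$ large, using finiteness of $K$), take the subcomplexes of simplices contained in each $U_i$, use Lemma~\ref{motivation} to see the two projections are homotopic over each piece, and then apply Spanier's results on contiguity classes of simplicial approximations of homotopic maps, with uniformity in $i$ coming from finiteness of the cover. The only cosmetic difference is the bookkeeping you flag at the end (and your appeal to Lemma~\ref{sceindaprox}), which the paper handles implicitly by the same citation of Spanier.
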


\begin{proof}
Let $\TC(\|K\|)=k$ and choose a motion planner $\{(U_0,\sigma_0),(U_1,\sigma_1),\ldots,(U_k,\sigma_k)\}$ for $\|K\|$ with $k+1$ local domains. Choose a large positive integer $b$ so that the realization of each simplex of $\bsd^b(K\times K)$ is contained in some $U_j$ ($0\leq j\leq k$) ---this uses the finiteness assumption on $K$. For each $j\in\{0,1,\ldots,k\}$, let $L_j$ be the subcomplex of $\bsd^b(K\times K)$ consisting of those simplices whose realization is contained in $U_j$. Then $L_0,L_1,\ldots,L_k$ cover $K\times K$. 

By Lemma~\ref{motivation} the two projections $\pi_1,\pi_2\colon\|K\|\times\|K\|\to\|K\|$ are homotopic over each $U_i$ and, in particular, they are homotopic over the (realization of the) corresponding subcomplex $L_i$. By~\cite[Lemma~3.5.4 and Theorem~3.5.6]{Spanier}, there are positive integers $b'$ and $c$ such that, for each $j\in\{0,1,\ldots,k\}$, the two simplicial composites 
$$\xymatrix{
\bsd^{b+b'}(L_i)\, \ar@{^{(}->}[r] & \bsd^{b+b'}(K\times K) \ar@<.5ex>[r]^>>>>>{\pi_1\,}
\ar@<-.5ex>[r]_>>>>>{\pi_2\,} & \,K
}$$
are $c$-contiguous. It follows that $\SC^{b+b'}_c(K)\leq k$, which implies equality in~(\ref{bascom}).
\end{proof}

Note that Theorem~\ref{sc=tc} asserts that, when the configuration space of a robot has a simplicial structure, it is always possible to produce optimal motion planners whose local rules are piecewise linear. The search of such motion planners can then be done by computer. Exhaustive search, however, is most likely domed to fail, as the size of the search space increases exponentially with every subdivision. Heuristic-based algorithms are thus eagerly awaited.

\smallskip
Theorem~\ref{sc=tc} allows us to extrapolate all the nice properties of Farber's topological complexity to the simplicial realm. For instance:
\begin{enumerate}[(i)]
\item Two complexes whose topological realizations are homotopy equivalent necessarily have the same simplicial complexity. 
\item A complex $K$ has $\SC(K)=0$ ($\SC(K)=1$) if and only if the realization $\|K\|$ is contractible (has the homotopy type of an odd sphere).
\item The simplicial complexity of an (ordered) product of (ordered) complexes $K_i$ is bounded from above by $\sum_i\SC(K_i)$.
\end{enumerate}
Since we have restricted ourselves to finite complexes, the homotopy conditions in (i) and (ii) above regarding the topological realizations of complexes can equally be stated in terms of the notion of strong equivalence of complexes.

\smallskip
We close this section by remarking that the approach in this paper is similar to that used in~\cite{MR3404603} for discretizing the Lusternik-Schnirelman category of a space $X$.

\section{An example: the circle}\label{toy}
We think of the circle $S^1$ as the realization of the complex $S_1$ given as the 1-dimensional skeleton of the 2-dimensional simplex $\Delta^2$. Let the vertices of $S_1$ be $0$, $1$, and $2$ ordered in the obvious way. The (realization of the ordered) product structure on $S_1\times S_1$ can be depicted as
\begin{equation}\label{s1xs1}
\begin{tikzpicture}[x=.6cm,y=.6cm]
\draw(0,0)--(6,0); \draw(0,0)--(0,6);
\draw(0,6)--(6,6); \draw(6,0)--(6,6);
\draw(0,0)--(6,6); \draw(6,2)--(2,6); 
\draw(0,2)--(6,2); \draw(0,4)--(6,4); 
\draw(2,0)--(2,6); \draw(4,0)--(4,6);
\draw(2,0)--(4,2); \draw(4,2)--(6,0);
\draw(0,2)--(2,4); \draw(2,4)--(0,6);
\node [below] at (0,0) {0};
\node [below] at (2,0) {1};
\node [below] at (4,0) {2};
\node [below] at (6,0) {0};
\node [left] at (0,0) {0};
\node [left] at (0,2) {1};
\node [left] at (0,4) {2};
\node [left] at (0,6) {0};
\node [above] at (0,6) {0};
\node [above] at (2,6) {1};
\node [above] at (4,6) {2};
\node [above] at (6,6) {0};
\node [right] at (6,0) {0};
\node [right] at (6,2) {1};
\node [right] at (6,4) {2};
\node [right] at (6,6) {0};
\end{tikzpicture}
\end{equation}
where opposite sides of the external square are identified as indicated. The corresponding barycentric subdivision starts as
$$\begin{tikzpicture}[x=.6cm,y=.6cm]
\draw(0,0)--(6,0); \draw(0,0)--(0,6);
\draw(0,6)--(6,6); \draw(6,0)--(6,6);
\draw(0,0)--(6,6); \draw(6,2)--(2,6); 
\draw(0,2)--(6,2); \draw(0,4)--(6,4); 
\draw(2,0)--(2,6); \draw(4,0)--(4,6);
\draw(2,0)--(4,2); \draw(4,2)--(6,0);
\draw(0,2)--(2,4); \draw(2,4)--(0,6);
\draw(4,2)--(6,4); \draw(6,2)--(4,3);
\draw(4,4)--(6,0); \draw(4,4)--(6,3);
\draw(6,2)--(5,4); \draw(4,0)--(6,2);
\draw(4,2)--(6,1); \draw(4,2)--(5,0);
\draw(4,1)--(6,0); \draw(1,4)--(0,6);
\draw(2,4)--(4,6); \draw(2,6)--(3,4);
\draw(4,4)--(0,6); \draw(4,4)--(3,6);
\draw(2,6)--(4,5); \draw(0,4)--(2,6);
\draw(2,4)--(1,6); \draw(2,4)--(0,5);
\end{tikzpicture}$$
where we have only shown the subdivision of the four ``squares'' in~(\ref{s1xs1}) whose diagonal has a negative slope ---but all other squares are to be subdivided in a similar way. Note that the subcomplex $K_0$ (whose realization is) shaded in gray in
$$\begin{tikzpicture}[x=.6cm,y=.6cm]
\draw(0,0)--(6,0); \draw(0,0)--(0,6); \draw(0,6)--(6,6); \draw(6,0)--(6,6); \draw(0,0)--(6,6); \draw(6,2)--(2,6); \draw(0,2)--(6,2); \draw(0,4)--(6,4); \draw(2,0)--(2,6); \draw(4,0)--(4,6); \draw(2,0)--(4,2); \draw(4,2)--(6,0); \draw(0,2)--(2,4); \draw(2,4)--(0,6);\draw(4,2)--(6,4); \draw(6,2)--(4,3); \draw(4,4)--(6,0); \draw(4,4)--(6,3); \draw(6,2)--(5,4); \draw(4,0)--(6,2); \draw(4,2)--(6,1); \draw(4,2)--(5,0); \draw(4,1)--(6,0); \draw(1,4)--(0,6); \draw(2,4)--(4,6); \draw(2,6)--(3,4); \draw(4,4)--(0,6); \draw(4,4)--(3,6);
\draw(2,6)--(4,5); \draw(0,4)--(2,6); \draw(2,4)--(1,6); \draw(2,4)--(0,5); \draw[fill=lightgray](0,0)--(2,0)--(2,2)--(0,0); \draw[fill=lightgray](0,0)--(0,2)--(2,2)--(0,0); \draw[fill=lightgray](0,2)--(2,2)--(2,4)--(0,2); \draw[fill=lightgray](2,2)--(2,4)--(4,4)--(2,2); \draw[fill=lightgray](2,2)--(4,2)--(4,4)--(2,2); \draw[fill=lightgray](2,0)--(2,2)--(4,2)--(2,0); \draw[fill=lightgray](4,4)--(4,6)--(6,6)--(4,4); \draw[fill=lightgray](4,4)--(6,4)--(6,6)--(4,4); \draw[fill=lightgray](2,4)--(3,4)--(2.66,4.66)--(2,4);
\draw[fill=lightgray](2.66,4.66)--(3,4)--(4,4)--(2.66,4.66); \draw[fill=lightgray](2.66,4.66)--(4,4)--(3,5)--(2.66,4.66); \draw[fill=lightgray](4,4)--(3,5)--(3.34,5.33)--(4,4); \draw[fill=lightgray](4,4)--(3.34,5.33)--(4,5)--(4,4); \draw[fill=lightgray](4,6)--(3.34,5.33)--(4,5)--(4,6); \draw[fill=lightgray](4,4)--(5,3)--(5.33,3.34)--(4,4); \draw[fill=lightgray](4,4)--(5.33,3.34)--(5,4)--(4,4); \draw[fill=lightgray](6,4)--(5.33,3.34)--(5,4)--(6,4); \draw[fill=lightgray](4,2)--(4,3)--(4.66,2.66)--(4,2); \draw[fill=lightgray](4.66,2.66)--(4,3)--(4,4)--(4.66,2.66); \draw[fill=lightgray](4.66,2.66)--(4,4)--(5,3)--(4.66,2.66);
\draw[fill=lightgray](4-4,4+2)--(5-4,3+2)--(5.33-4,3.34+2)--(4-4,4+2);
\draw[fill=lightgray](4-4,4+2)--(5.33-4,3.34+2)--(5-4,4+2)--(4-4,4+2);
\draw[fill=lightgray](6-4,4+2)--(5.33-4,3.34+2)--(5-4,4+2)--(6-4,4+2);
\draw[fill=lightgray](4-4,2+2)--(4-4,3+2)--(4.66-4,2.66+2)--(4-4,2+2);
\draw[fill=lightgray](4.66-4,2.66+2)--(4-4,3+2)--(4-4,4+2)--(4.66-4,2.66+2); \draw[fill=lightgray](4.66-4,2.66+2)--(4-4,4+2)--(5-4,3+2)--(4.66-4,2.66+2); \draw[fill=lightgray](4+2,4-4)--(3+2,5-4)--(3.34+2,5.33-4)--(4+2,4-4); \draw[fill=lightgray](4+2,4-4)--(3.34+2,5.33-4)--(4+2,5-4)--(4+2,4-4); \draw[fill=lightgray](4+2,6-4)--(3.34+2,5.33-4)--(4+2,5-4)--(4+2,6-4); \draw[fill=lightgray](2+2,4-4)--(3+2,4-4)--(2.66+2,4.66-4)--(2+2,4-4); \draw[fill=lightgray](2.66+2,4.66-4)--(3+2,4-4)--(4+2,4-4)--(2.66+2,4.66-4); \draw[fill=lightgray](2.66+2,4.66-4)--(4+2,4-4)--(3+2,5-4)--(2.66+2,4.66-4);\end{tikzpicture}$$
collapses to the diagonal $\Delta$ in $S_1\times S_1$, whereas the realization of the complex $K_1$ left unshaded deformation retracts to the antidiagonal $\Delta'=\{(x,-x)\}$ in $S^1\times S^1$. Since $S^1$ admits (local) motion planners both in $\|\Delta\|$ and $\Delta'$, Lemma~\ref{motivation} (together with the properties relating homotopy maps with the contiguity classes of their simplicial approximations) yields $\SC(S_1)\leq1$. Equality then follows from~(\ref{bascom}). Note that in this case~Theorem~\ref{sc=tc} holds in the stronger form $\SC^1(S_1)=\SC^2(S_1)=\cdots=\TC(S^1)=1$.


\begin{thebibliography}{1}

\bibitem{ES52}
Samuel Eilenberg and Norman Steenrod.
\newblock {\em Foundations of algebraic topology}.
\newblock Princeton University Press, Princeton, New Jersey, 1952.

\bibitem{F}
Michael Farber.
\newblock {\em Invitation to topological robotics}.
\newblock Zurich Lectures in Advanced Mathematics. European Mathematical
  Society (EMS), Z\"urich, 2008.

\bibitem{MR3404603}
D.~Fern\'andez-Ternero, E.~Mac\'ias-Virg\'os, and J.~A. Vilches.
\newblock Lusternik-{S}chnirelmann category of simplicial complexes and finite
  spaces.
\newblock {\em Topology Appl.}, 194:37--50, 2015.

\bibitem{Spanier}
Edwin~H. Spanier.
\newblock {\em Algebraic topology}.
\newblock McGraw-Hill Book Co., New York-Toronto, Ont.-London, 1966.

\bibitem{tanaka}
Kohei Tanaka.
\newblock {\em A combinatorial description of topological complexity for finite
  spaces}.
\newblock arXiv:1605.06755.

\end{thebibliography}

\end{document}